\documentclass[12pt]{article}
\usepackage{amsfonts}
\usepackage{amsmath}
\usepackage{amssymb}
\usepackage{amsthm}
\usepackage{enumerate}
\setlength{\parskip}{1em}
\makeatletter
\def\imod#1{\allowbreak\mkern10mu({\operator@font mod}\,\,#1)}
\makeatother
\DeclareMathOperator{\ord}{ord}

\newtheorem{theorem}{Theorem}[section]
\newtheorem{lemma}{Lemma}[section]

\newtheorem{conjecture}{Conjecture}[section]

\theoremstyle{definition}

\newtheorem{remark}{Remark}[section]

\begin{document}
\begin{center}
\vskip 1cm{\LARGE\bf On Schemmel Nontotient Numbers
\vskip 1cm
\large
Colin Defant\\
Department of Mathematics\\
University of Florida\\
United States\\
cdefant@ufl.edu}
\end{center}
\vskip .2 in

\begin{abstract} 
For each positive integer $r$, let $S_r$ denote the $r^{th}$ Schemmel totient function, a multiplicative arithmetic function defined by 
\[S_r(p^{\alpha})=\begin{cases} 0, & \mbox{if } p\leq r; \\ p^{\alpha-1}(p-r), & \mbox{if } p>r \end{cases}\] 
for all primes $p$ and positive integers $\alpha$. The function $S_1$ is simply Euler's totient function $\phi$. We define a Schemmel nontotient number of order $r$ to be a positive integer that is not in the range of the function $S_r$. In this paper, we modify several proofs due to Zhang in order to illustrate how many of the results currently known about nontotient numbers generalize to results concerning Schemmel nontotient numbers. We also invoke Zsigmondy's Theorem in order to generalize a result due to Mendelsohn. 
\end{abstract} 
\bigskip

\noindent \emph{Keywords: } Nontotient; Schemmel totient.

\noindent 2010 {\it Mathematics Subject Classification}:  Primary 11A25; Secondary 11N64.

\section{Introduction} 
Integers in the range of Euler's totient function $\phi$ are known as totient numbers, and positive integers that are not totient numbers are known as nontotient numbers. The study of nontotient numbers has burgeoned in the past sixty years due to contributors such as Schinzel, Ore, Selfridge, Mendelsohn, and Zhang. 
\par 
In 1869, V. Schemmel introduced a class of functions $S_r$, now known as Schemmel totient functions, that generalize Euler's totient function \cite{schemmel69}. For each positive integer $r$, $S_r$ is a multiplicative function that satisfies 
\[S_r(p^{\alpha})=\begin{cases} 0, & \mbox{if } p\leq r \\ p^{\alpha-1}(p-r), & \mbox{if } p>r \end{cases}\] 
for all primes $p$ and positive integers $\alpha$. We will make use of the fact that $S_r(x)\vert S_r(y)$ whenever $x$ and $y$ are positive integers such that $x\vert y$.  
\par 
For a positive integer $r$, we define a Schemmel totient number of order $r$ to be an integer in the range of the function $S_r$. Any positive integer that is not a Schemmel totient number of order $r$ is said to be a Schemmel nontotient number of order $r$. For convenience, we will let $G_r$ denote the set of Schemmel nontotient numbers of order $r$. Our goal is to generalize some of the results currently known about nontotient numbers to results concerning Schemmel nontotient numbers and to encourage further investigation of Schemmel nontotient numbers. In fairness to Ming Zhi Zhang, we note that many of the proofs presented here are merely adaptations of proofs given in \cite{Zhang93}. 
\par 
Many theorems deal with which nontotient numbers are divisible by certain powers of $2$, so we will explore two ways of generalizing such theorems. If $r$ is odd, then it is easy to see that all odd integers greater than $1$ are Schemmel nontotient numbers, Thus, when $r$ is odd, we will continue to pay attention to which Schemmel nontotient numbers are divisible by certain powers of $2$. On the other hand, if $r$ is even, then every even positive integer is a Schemmel nontotient number of order $r$. This follows from the fact that if $r>1$ and $S_r(n)>0$ for some positive integer $n$, then $n$ must be odd. Furthermore, it is easy to see that $S_r(n)$ is odd whenever $S_r(n)$ is positive, $n$ is odd, and $r$ is even. Thus, it is uninteresting to look at powers of $2$ dividing Schemmel nontotient numbers of order $r$ for even values of $r$. Instead, we will concentrate on values of $r$ for which $r+1$ is prime, and we will focus on the Schemmel nontotient numbers of order $r$ that are divisible by certain powers of $r+1$. 
\par 
For now, we prove one result for which the parity of $r$ is irrelevant. 
\begin{theorem} \label{Thm1.1} 
If $r$ and $m$ are positive integers, then there exist infinitely many primes $p$ such that $pm$ is a Schemmel nontotient number of order $r$.  
\end{theorem}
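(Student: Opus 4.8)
The plan is to produce the desired primes inside a single arithmetic progression and then show, using only the multiplicativity and the explicit formula for $S_r$, that $pm$ can have no preimage. Fix $r$ and $m$, put
\[
L=\operatorname{lcm}\{\,k+r : k\mid m\,\},
\]
and restrict attention to primes $p$ with $p\equiv 1\pmod L$ and $p>m+r$. Since $\gcd(1,L)=1$, Dirichlet's theorem on primes in arithmetic progressions supplies infinitely many such $p$, so it suffices to prove that $pm\in G_r$ for each of them.

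The first observation is that this choice of $p$ forces $kp+r$ to be composite for every divisor $k$ of $m$. Indeed, from $p\equiv 1\pmod{k+r}$ and $p>m+r\ge k+r$ we may write $p-1=(k+r)s$ with $s\ge 1$, and then $kp+r=k(p-1)+(k+r)=(k+r)(ks+1)$ is a product of two integers each at least $2$.

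Now suppose, for contradiction, that $S_r(n)=pm$ for some positive integer $n$. Since $pm>0$ and $S_r$ is multiplicative with $S_r(q^{\alpha})=0$ for primes $q\le r$, every prime dividing $n$ exceeds $r$; writing $n=\prod_i q_i^{\alpha_i}$ we get $\prod_i q_i^{\alpha_i-1}(q_i-r)=pm$. As $p$ is prime it divides some factor $q_i^{\alpha_i-1}$ or some factor $q_i-r$. In the first case $q_i=p$ and $\alpha_i\ge 2$, and dividing the displayed identity by $p$ yields $m=p^{\alpha_i-2}(p-r)\prod_{j\ne i}q_j^{\alpha_j-1}(q_j-r)\ge p-r>m$, which is absurd. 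In the second case, write $q_i-r=pw$ with $w\ge 1$; dividing by $p$ shows $q_i^{\alpha_i-1}w\mid m$, and since $q_i=pw+r>m$ this is possible only when $\alpha_i=1$ and $w\mid m$. But then $q_i=wp+r$ with $w\mid m$, which is composite by the previous paragraph, contradicting the primality of $q_i$. Hence no such $n$ exists, so $pm\in G_r$.

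The step I expect to be the crux is this structural analysis of a hypothetical preimage: the real content is that $p$ can enter $S_r(n)=\prod_i q_i^{\alpha_i-1}(q_i-r)$ only through a prime factor $q_i$ of the rigid shape $wp+r$ with $w\mid m$, the competing possibility ($q_i=p$ occurring with multiplicity) being eliminated by a size comparison once $p>m+r$. Having isolated that rigidity, the congruence $p\equiv 1\pmod L$ is exactly what rules out every candidate $q_i$ from being prime, and Dirichlet's theorem does the rest; no other external input is needed, and the parity of $r$ plays no role.
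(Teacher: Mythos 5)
Your proof is correct, and its overall architecture matches the paper's: restrict to a Dirichlet progression of primes $p>m+r$ along which $kp+r$ is composite for every divisor $k$ of $m$, then show that any $S_r$-preimage of $pm$ would force a prime of the form $kp+r$ with $k\mid m$, the alternative ($p$ entering through a repeated prime factor equal to $p$) being eliminated by the size condition $p>m+r$. Where you genuinely diverge is in how the compositeness of the numbers $kp+r$ is engineered. The paper selects an auxiliary prime $q_k>\max(m,r)$ for each divisor $k$ of $m$ and imposes $kp\equiv -r\pmod{q_k}$ via the Chinese Remainder Theorem, so that $q_k$ becomes a proper divisor of $kp+r$; this requires verifying that the resulting residue class is invertible modulo the product of the $q_k$ before Dirichlet can be invoked. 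You instead impose the single congruence $p\equiv 1\pmod{L}$ with $L=\operatorname{lcm}\{k+r:k\mid m\}$, which produces the explicit factorization $kp+r=(k+r)(ks+1)$ into two factors each at least $2$. Your variant is leaner: no auxiliary primes are needed, coprimality of the residue $1$ to the modulus is automatic, and compositeness is witnessed by an algebraic identity rather than by a congruence whose consistency must be checked. Both arguments consume exactly the same external inputs, namely Dirichlet's theorem and the multiplicativity of $S_r$, and your case analysis of the hypothetical preimage (including the divisibility bound $q_i^{\alpha_i-1}w\mid m$ forcing $\alpha_i=1$ and $w\mid m$) is a correct, slightly more explicit rendering of the paper's dichotomy between $p^2\mid x$ and $p^2\nmid x$.
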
     
\begin{proof} 
Fix positive integers $r$ and $m$, and let the positive divisors of $m$ be $d_1,d_2,\ldots,d_s$. Let $q_1,q_2,\ldots,q_s$ be primes satisfying $\max(m,r)<q_1<q_2<\cdots<q_s$. By the Chinese Remainder Theorem and Dirichlet's theorem concerning the infinitude of primes in arithmetic progressions, there are infinitely many primes $p>\max(q_s,m+r)$ that satisfy $d_ip\equiv -r\imod{q_i}$ for all $i\in\{1,2,\ldots,s\}$. Fix one such prime $p$, and suppose, for the sake of finding a contradiction, that $S_r(x)=pm$ for some positive integer $x$. If $p^2\vert x$, then $p(p-r)\vert S_r(x)=pm$, which contradicts the fact that $p>m+r$. If $p^2\nmid x$, then there must exist some prime $q$ such that $p\vert q-r$ and $q\vert x$. Then there exists some integer $d$ such that $pd=q-r\vert S_r(x)=pm$. This implies that $d=d_i$ for some $i\in\{1,2,\ldots,s\}$. Then, because $p$ satisfies the congruence $d_ip\equiv -r\imod{q_i}$, we see that $q_i\vert pd+r=q$, which implies that $q_i=q=pd+r$. However, this implies that $q_i>p$, which contradicts the fact that $p>q_s\geq q_i$.  
\end{proof} 
\par 
Throughout the remainder of this paper, we will let $\mathbb{N}$, $\mathbb{N}_0$, and $\mathbb{P}$ be the sets of positive integers, nonnegative integers, and prime numbers, respectively.  
\section{Schemmel Nontotient Numbers of Order One Less than a Prime} 
When $r+1$ is prime, it is particularly interesting to consider positive integers $k$ such that $(r+1)^\alpha k\in G_r$ for all nonnegative integers $\alpha$. To do so, we first establish the following two lemmata, the first of which generalizes a theorem due to Mendelsohn \cite{Mendelsohn76}.  
\begin{lemma} \label{Lem2.1} 
Let $m$ be a positive integer such that $m+1$ is not a power of $2$. If there exist positive integers $N,p_1,p_2$ such that $p_1$ and $p_2$ are distinct primes and $\ord_{p_1}(m)=\ord_{p_2}(m)=2^N$, then there exists an arithmetic progression $A$ with the following  three properties:
\begin{enumerate}[(a)]
\item $A$ contains infinitely many prime terms.
\item The common difference of $A$ is a product of $N+1$ distinct primes.
\item If $x$ is a term of $A$ and $t$ is a nonnegative integer, then $m^tx+m-1$ is divisible by exactly one of the $N+1$ prime divisors of the common difference of $A$.  
\end{enumerate}  
\end{lemma}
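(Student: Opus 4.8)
The plan is to produce $A$ as a single residue class modulo $D=q_0q_1\cdots q_N$, where $q_0,\ldots,q_N$ are $N+1$ distinct primes chosen so that the multiplicative orders $\ord_{q_j}(m)$ together encode an \emph{exact} covering system of the integers. The covering system I have in mind is the classical ``binary'' one, consisting of the congruences $t\equiv 2^k\imod{2^{k+1}}$ for $k=0,1,\ldots,N-1$ together with $t\equiv 0\imod{2^N}$; inspecting the $2$-adic valuation of $t$ shows that every integer $t$ satisfies exactly one of these $N+1$ congruences.

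The next step is to attach a prime to each congruence, demanding that the prime $q$ attached to a congruence of modulus $d$ satisfy $\ord_q(m)=d$. For the $N-1$ congruences of modulus $2^{k+1}$ with $k=0,\ldots,N-2$, a prime of order $2^{k+1}$ is precisely a primitive prime divisor of $m^{2^{k+1}}-1$, and Zsigmondy's Theorem supplies one; the only relevant exceptional case is $k=0$ (the number $m^2-1$), and it is excluded exactly because $m+1$ is assumed not to be a power of $2$. (The hypotheses also force $m\geq 2$, since $\ord_{p_1}(m)=2^N\geq 2$ is impossible when $m=1$.) The two congruences of modulus $2^N$, namely $t\equiv 2^{N-1}\imod{2^N}$ and $t\equiv 0\imod{2^N}$, I would assign to the two \emph{given} primes $p_1$ and $p_2$; this is the precise point where having \emph{two} primes of order $2^N$, rather than one, is needed. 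The resulting list of orders is $2,4,\ldots,2^{N-1},2^N,2^N$, and since $p_1\neq p_2$ the $N+1$ primes are distinct, which already gives property (b) with common difference $D$.

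I would then fix the residue class. Write $t_q$ and $d_q$ for the offset and modulus of the congruence attached to a prime $q$, so $d_q=\ord_q(m)$. Imposing $x\equiv(1-m)m^{-t_q}\imod{q}$ for each of the $N+1$ primes determines a single class $x_0\imod{D}$ by the Chinese Remainder Theorem, and I take $A$ to be the set of positive integers lying in it. Since $\ord_q(m)\geq 2$ for every chosen prime, $q\nmid m-1$, hence $q\nmid x_0$; thus $\gcd(x_0,D)=1$ and Dirichlet's theorem gives property (a). For property (c), fix a term $x$ of $A$ and a nonnegative integer $t$: modulo each of our primes $q$ one computes $m^tx+m-1\equiv m^tx_0+m-1\equiv(m-1)\bigl(1-m^{\,t-t_q}\bigr)\imod{q}$, and since $q\nmid m-1$ this is $\equiv 0$ exactly when $d_q\mid t-t_q$, that is, exactly when $t$ lies in the congruence class attached to $q$. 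By the exact-covering property, precisely one of $q_0,\ldots,q_N$ divides $m^tx+m-1$.

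The hard part is the set-up rather than any calculation: recognizing that ``$m^tx+m-1$ is divisible by exactly one of the $N+1$ primes for every $t$'' is really a disguised request for an exact covering system carried by the $2$-power orders of $m$, and then checking that Zsigmondy furnishes a prime of each needed order while the two explicit hypotheses --- $m+1$ not a power of $2$, and the existence of two primes of order $2^N$ --- exactly patch the two places (the Zsigmondy exception at exponent $2$, and the duplicated modulus $2^N$) where a naive count would fail.
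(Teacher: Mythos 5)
Your proposal is correct and is essentially the paper's own argument: the same exact covering system $t\equiv 2^{n-1}\imod{2^n}$ ($n=1,\ldots,N$) together with $t\equiv 0\imod{2^N}$, the same assignment of Zsigmondy primes to the smaller moduli and of $p_1,p_2$ to the two classes of modulus $2^N$, and the same CRT--Dirichlet construction with the congruences $m^{t_q}x+m\equiv 1\imod{q}$. The only difference is presentational --- you make the covering-system structure and the role of the hypothesis ``$m+1$ is not a power of $2$'' (the Zsigmondy exception at exponent $2$) more explicit than the paper does.
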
 
\begin{proof} 
Zsigmondy's Theorem tells us that, for each positive integer $n$, there exists some prime that divides $m^{2^n}-1$ and does not divide $m^k-1$ for all positive integers $k<2^n$. In other words, for each positive integer $n$, we may find a prime $q_n$ such that $\ord_{q_n}(m)=2^n$. Suppose there exist positive integers $N,p_1,p_2$ such that $p_1$ and $p_2$ are distinct primes and $\ord_{p_1}(m)=\ord_{p_2}(m)=2^N$. Without loss of generality, we may let $q_N=p_1$ and write $q_0=p_2$. Let $\displaystyle{M=\prod_{i=0}^N}q_i$, and consider the system of congruences 
\begin{equation} \label{Eq2.1} 
\begin{cases} x+m\equiv 1\imod{q_n}, & \mbox{if } n=0 \\ m^{2^{n-1}}x+m\equiv 1\imod{q_n}, & \mbox{if } n\in\{1,2,\ldots,N\}. \end{cases}
\end{equation} 
The Chinese Remainder Theorem tells us that the positive solutions to \eqref{Eq2.1} are precisely the terms of an arithmetic progression $A=a,a+M,a+2M,\ldots$ for some positive integer $a<M$. In addition, any solution to \eqref{Eq2.1} is relatively prime to $M$ because $q_n\nmid m-1$ for all $n\in\{0,1,\ldots,N\}$. Therefore, Dirichlet's theorem concerning the infinitude of primes in arithmetic progressions guarantees that $A$ has infinitely many prime terms. 
\par 
Now, choose some term $x$ of $A$, and let $t$ be a nonnegative integer. We will show that $q_n\vert m^tx+m-1$ for precisely one $n\in\{0,1,\ldots,N\}$. First, let $n\in\{1,2,\ldots,N\}$. We may use the fact that $m^{2^{n-1}}x+m\equiv 1\imod{q_n}$ to conclude that $q_n\vert m^tx+m-1$ if and only if $m^t\equiv m^{2^{n-1}}\imod{q_n}$. Furthermore, because $\ord_{q_n}(m)=2^n$, we see that $m^t\equiv m^{2^{n-1}}\imod{q_n}$ if and only if $t\equiv 2^{n-1}\imod{2^n}$. Similarly, because $x+m\equiv 1\imod{q_0}$, we see that $q_0\vert m^tx+m-1$ if and only if $m^t\equiv 1\imod{q_0}$. Because $\ord_{q_0}(m)=2^N$, we find that $m^t\equiv 1\imod{q_0}$ if and only if $t\equiv 0\imod{2^N}$. If $t=0$, it is clear that $q_0$ is the only element of the set $Q=\{q_0,q_1,\ldots,q_N\}$ that divides $m^tx+m-1$, so we may assume $t>0$. If we write $t=2^{\beta}\mu$, where $\beta,\mu\in\mathbb{N}_0$ and $2\nmid\mu$, then $\beta$ completely determines which primes in $Q$ divide $m^tx+m-1$. If $\beta\geq N$, then $q_0$ is the only element of $Q$ that divides $m^tx+m-1$. If $\beta<N$, then $q_{\beta+1}$ is the only element of $Q$ that divides $m^tx+m-1$. This completes the proof. 
\end{proof}  
\begin{lemma} \label{Lem2.2} 
If $r$, $\alpha$, and $p$ are nonnegative integers with $r+1,p\in\mathbb{P}$, then $(r+1)^{\alpha}p\in G_r$ if and only if $p\neq (r+1)^t+r$ and $(r+1)^tp+r\not\in\mathbb{P}$ for all nonnegative integers $t\leq\alpha$.   
\end{lemma}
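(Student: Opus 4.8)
The plan is to prove the biconditional by analyzing, for a prime $p$ with $r+1$ prime, exactly which positive integers $n$ can satisfy $S_r(n) = (r+1)^\alpha p$. The forward direction (contrapositive) is the easy one: if $p = (r+1)^t + r$ for some $t \le \alpha$, then taking $n = (r+1)^{\alpha - t + 1}\, p$ (or a similar explicit value), one computes $S_r(n)$ using multiplicativity and the fact that $S_r((r+1)^\beta) = (r+1)^{\beta-1}((r+1)-r) = (r+1)^{\beta-1}$ since $r+1$ is prime and $r+1 > r$; likewise if $(r+1)^t p + r = q$ is prime for some $t \le \alpha$, then $q > r$ and $S_r(q) = q - r = (r+1)^t p$, so $S_r\bigl((r+1)^{\alpha-t+1} q\bigr) = (r+1)^{\alpha-t}(r+1)^t p = (r+1)^\alpha p$. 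In each case we exhibit a preimage, so $(r+1)^\alpha p \notin G_r$.

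For the reverse direction I would argue the contrapositive: assume $(r+1)^\alpha p \notin G_r$, so $S_r(n) = (r+1)^\alpha p$ for some $n \in \mathbb{N}$, and deduce that either $p = (r+1)^t + r$ or $(r+1)^t p + r \in \mathbb{P}$ for some $t \le \alpha$. Write $n = \prod q_i^{\gamma_i}$ over the primes $q_i \mid n$; since $S_r$ is multiplicative and $S_r(q_i^{\gamma_i}) = q_i^{\gamma_i - 1}(q_i - r)$ for $q_i > r$ (and $0$ for $q_i \le r$, which cannot occur since the product is positive), we have $(r+1)^\alpha p = \prod q_i^{\gamma_i - 1}(q_i - r)$. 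The key structural point is that $p$ is a prime appearing on the left, so $p$ must divide either some $q_i^{\gamma_i - 1}$ or some $q_i - r$. The first case forces $q_i = p$ and then $p - r \mid S_r(p^{\gamma_i}) \mid (r+1)^\alpha p$, and since $\gcd(p, p-r)$ is a divisor of $r$ while the only prime of $(r+1)^\alpha p$ other than $p$ is $r+1 > r$, one concludes $p - r$ is a power of $r+1$, i.e. $p = (r+1)^t + r$ for some $t$; a bound argument using $(r+1)^{t-1} \mid (r+1)^\alpha$ gives $t \le \alpha$ after checking the case where $p$ itself also contributes. The second case gives a prime $q = q_i$ with $q - r$ dividing $(r+1)^\alpha p$ and $q \mid n$; peeling off the contribution of $q$ and iterating (or directly analyzing the factorization of $q - r$ as $(r+1)^{t} p$ or $(r+1)^t$) yields $q = (r+1)^t p + r$ or $q = (r+1)^t + r$ with $t \le \alpha$, and either possibility lands in the desired conclusion.

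The main obstacle I anticipate is the bookkeeping in the reverse direction: ruling out "mixed" factorizations where the power of $r+1$ and the prime $p$ are split across several factors $q_i - r$ and $q_i^{\gamma_i - 1}$, and pinning down the exact exponent $t$ so that $t \le \alpha$ rather than just $t \le \alpha + 1$ or so. The clean way to handle this is to observe that any prime $q_i \ne p, r+1$ dividing $n$ contributes a factor $q_i^{\gamma_i - 1}(q_i - r)$ all of whose prime factors lie in $\{p, r+1\}$ — in particular $\gamma_i = 1$ (else $q_i \in \{p, r+1\}$) and $q_i - r$ is of the form $(r+1)^a p^b$ with $b \in \{0,1\}$ — so at most one such "extra" prime $q_i$ can exist (since $p$ appears to the first power on the left), and the global exponent of $r+1$ is conserved, which forces the inequality $t \le \alpha$. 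I would also separately dispose of the degenerate cases $\alpha = 0$, $p = r+1$, and $r + 1 = 2$ (where "$r+1$ not a power of $2$" hypotheses from Lemma~\ref{Lem2.1} are irrelevant here but edge behavior of $S_r$ still needs a sentence), and note that the hypotheses only require $r, \alpha, p \ge 0$ with $r+1, p$ prime, so $r \ge 1$ automatically.
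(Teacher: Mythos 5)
Your proposal follows essentially the same route as the paper: one direction by exhibiting explicit preimages, the other by analyzing where the prime $p$ can arise in the factorization $S_r(x)=\prod q_i^{\gamma_i-1}(q_i-r)$, which is the same dichotomy the paper phrases as ``$p^2\mid x$ versus $p^2\nmid x$.'' One small correction: when $p=(r+1)^t+r$ with $t>0$, the witness must be $(r+1)^{\alpha-t+1}p^2$, not $(r+1)^{\alpha-t+1}p$ (the latter gives $S_r(n)=(r+1)^{\alpha}$, missing the factor of $p$), and the case $t=0$, i.e.\ $p=r+1$, needs the separate treatment you already flag, where $S_r((r+1)^{\alpha+2})=(r+1)^{\alpha}p$.
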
 
\begin{proof} 
First, suppose $S_r(x)=(r+1)^{\alpha}p$ for some positive integer $x$. If $p^2\vert x$, then $p(p-r)\vert S_r(x)=(r+1)^{\alpha}p$, so $p-r=(r+1)^t$ for some nonnegative integer $t\leq\alpha$. On the other hand, if $p^2\nmid x$, then there exists some prime $q$ such that $p\vert q-r$ and $q\vert x$. This implies that $q-r\vert S_r(x)=(r+1)^{\alpha}p$, which means that $q-r=(r+1)^tp$ for some nonnegative integer $t\leq\alpha$. Thus, if $p\neq (r+1)^t+r$ and $(r+1)^tp+r\not\in\mathbb{P}$ for all nonnegative integers $t\leq\alpha$, then $(r+1)^{\alpha}p\in G_r$. 
\par 
To prove the converse, suppose $p=(r+1)^t+r$ or $(r+1)^tp+r\in\mathbb{P}$ for some nonnegative integer $t\leq\alpha$. If $p=(r+1)^t+r$ and $t>0$, then $S_r((r+1)^{\alpha-t+1}p^2)=S_r((r+1)^{\alpha-t+1})S_r(p^2)=(r+1)^{\alpha-t}p(p-r)=(r+1)^{\alpha}p$. If $p=(r+1)^t+r$ and $t=0$, then $S_r((r+1)^{\alpha+2})=(r+1)^{\alpha+1}=(r+1)^{\alpha}p$. If $(r+1)^tp+r\in\mathbb{P}$, then $S_r((r+1)^{\alpha-t+1}((r+1)^tp+r))=S_r((r+1)^{\alpha-t+1})S_r((r+1)^tp+r)=(r+1)^{\alpha}p$. Thus, if $(r+1)^{\alpha}p\in G_r$, then we must have $p\neq (r+1)^t+r$ and $(r+1)^tp+r\not\in\mathbb{P}$ for all nonnegative integers $t\leq\alpha$. 
\end{proof} 
\begin{theorem} \label{Thm2.1} 
Suppose $r+1$ is a prime that is not a Mersenne prime. If there exist integers $N,p_1,p_2$ such that $p_1$ and $p_2$ are distinct primes and $\ord_{p_1}(r+1)=\ord_{p_2}(r+1)=2^N$, then there are infinitely many primes $p$ such that $(r+1)^{\alpha}p\in G_r$ for all nonnegative integers $\alpha$.  
\end{theorem}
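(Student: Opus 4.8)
The plan is to apply Lemma~\ref{Lem2.1} with $m=r+1$, refine the resulting progression by one extra congruence, and then invoke Lemma~\ref{Lem2.2}. First note that because $r+1$ is prime and is not a Mersenne prime, $r+2$ cannot be a power of $2$: otherwise $r+1=2^k-1$ would be a Mersenne prime. Hence $m=r+1$ meets the hypothesis of Lemma~\ref{Lem2.1} that $m+1$ is not a power of $2$, and the assumption $\ord_{p_1}(r+1)=\ord_{p_2}(r+1)=2^N$ with $p_1\neq p_2$ prime is precisely the remaining hypothesis. Lemma~\ref{Lem2.1} therefore supplies distinct primes $q_0,q_1,\ldots,q_N$ and an arithmetic progression $A=a,a+M,a+2M,\ldots$ with $M=\prod_{i=0}^{N}q_i$ such that $A$ has infinitely many prime terms and, for every term $x$ of $A$ and every $t\in\mathbb{N}_0$, the integer $(r+1)^t x+r$ is divisible by exactly one of $q_0,\ldots,q_N$.

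Next I would tighten $A$ so as to avoid the first kind of exceptional value appearing in Lemma~\ref{Lem2.2}. Choose a prime $\ell>\max\{r+1,q_0,q_1,\ldots,q_N\}$ and adjoin the congruence $x\equiv r\imod{\ell}$ to the system \eqref{Eq2.1}. Since $0<r<\ell$, any solution of the enlarged system remains coprime to its modulus $M\ell$, so the Chinese Remainder Theorem together with Dirichlet's theorem yields a sub-progression $A'$ of $A$ with infinitely many prime terms. If a term $x$ of $A'$ equalled $(r+1)^t+r$ for some $t\in\mathbb{N}_0$, then $\ell$ would divide $x-r=(r+1)^t$ and hence divide $r+1$, which is impossible; thus no term of $A'$ has the form $(r+1)^t+r$.

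Finally I would take any prime term $p$ of $A'$ with $p>\max_i q_i$, noting that all but finitely many prime terms of $A'$ satisfy this. Fix $t\in\mathbb{N}_0$. Since $p$ is a term of $A$, Lemma~\ref{Lem2.1} gives a unique $q_n$ dividing $(r+1)^t p+r$; as $(r+1)^t p+r\geq p+r>q_n$, this integer is a proper multiple of $q_n$ and hence not prime, so $(r+1)^t p+r\notin\mathbb{P}$. Moreover $p\neq(r+1)^t+r$ by the previous paragraph. Since $t\in\mathbb{N}_0$ was arbitrary, Lemma~\ref{Lem2.2} gives $(r+1)^{\alpha}p\in G_r$ for every $\alpha\in\mathbb{N}_0$. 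As $A'$ contains infinitely many such primes $p$, the theorem follows.

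The one genuinely delicate step is arranging $p\neq(r+1)^t+r$ for all $t$: the set $\{(r+1)^t+r:t\in\mathbb{N}_0\}$ can meet $A$ in infinitely many terms and conceivably in infinitely many primes, so Dirichlet's theorem by itself does not let us discard it; introducing the auxiliary prime $\ell$ is what avoids any appeal to prime-counting estimates. The remaining verifications — that $r+1$ being a non-Mersenne prime forces $r+2$ not to be a power of $2$, and that restricting to $A'$ preserves the divisibility conclusion of Lemma~\ref{Lem2.1} (which is immediate, as terms of $A'$ are terms of $A$) — are routine.
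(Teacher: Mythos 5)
Your proof is correct and follows the paper's overall strategy --- apply Lemma \ref{Lem2.1} with $m=r+1$ and then feed the resulting primes into Lemma \ref{Lem2.2} --- but you handle the exclusion of primes of the form $(r+1)^t+r$ by a genuinely different device. The paper keeps the progression $A$ unchanged and argues by counting: the Prime Number Theorem for arithmetic progressions gives $\pi(x;M,a)\sim\frac{1}{\phi(M)}\frac{x}{\log x}$ prime terms up to $x$, while at most $O(\log x)$ integers up to $x$ have the form $(r+1)^t+r$, so infinitely many prime terms survive. You instead adjoin to \eqref{Eq2.1} the congruence $x\equiv r$ modulo an auxiliary prime $\ell>\max\{r+1,q_0,\ldots,q_N\}$, which forces every term of the refined progression $A'$ to avoid the form $(r+1)^t+r$ outright (otherwise $\ell$ would divide $(r+1)^t$ and hence $r+1$), after which a single application of Dirichlet's theorem suffices. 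Your route is slightly more elementary, trading the quantitative prime-counting estimate for one extra modulus; the coprimality check ($0<r<\ell$, so terms of $A'$ are units modulo $M\ell$) is exactly what keeps Dirichlet applicable, and you verify it. Your explicit observation that $r+1$ not being a Mersenne prime is what guarantees $m+1=r+2$ is not a power of $2$, so that Lemma \ref{Lem2.1} applies, is a hypothesis check the paper leaves implicit. All steps are sound.
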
 
\begin{proof} 
Suppose that there exist integers $N,p_1,p_2$ such that $p_1$ and $p_2$ are distinct primes and $\ord_{p_1}(r+1)=\ord_{p_2}(r+1)=2^N$. We will show that there are infinitely many primes $p$ such that $p\neq (r+1)^t+r$ and $(r+1)^tp+r\not\in\mathbb{P}$ for all nonnegative integers $t$, from which Lemma \ref{Lem2.2} will yield the desired result. We may use Lemma \ref{Lem2.1} to conclude that there exists an arithmetic progression $A$ that has infinitely many prime terms and has common difference $\displaystyle{M=\prod_{i=0}^N}q_i$, where $q_0,q_1,\ldots,q_N$ are distinct primes. Furthermore, Lemma \ref{Lem2.1} tells us that if $p>M$ is a prime term of $A$ and $t$ is any nonnegative integer, then $(r+1)^tp+r$ is composite because it is divisible by one of the $N+1$ prime divisors of $M$. Hence, it suffices to show that there are infinitely many prime terms $p$ of $A$ that are not of the form $(r+1)^t+r$. 
\par 
If we let $\pi(x;M,a)$ denote the number of prime terms of $A$ that are less than or equal to $x$, then the Prime Number Theorem extended to arithmetic progressions tells us that $\displaystyle{\pi(x;M,a)\sim\frac{1}{\phi(M)}\frac{x}{\log x}}$ as $x\rightarrow\infty$. Because the number of primes less than or equal to $x$ of the form $(r+1)^t+r$ is clearly of order $\displaystyle{o\left(\frac{x}{\log x}\right)}$, the proof is complete.    
\end{proof} 
\begin{theorem} \label{Thm2.2}
Suppose that $r+1$ is a prime that is not a Mersenne prime and that there exist integers $N,p_1,p_2$ such that $p_1$ and $p_2$ are distinct primes and $\ord_{p_1}(r+1)=\ord_{p_2}(r+1)=2^N$. Let $M$ be as in the proof of Theorem \ref{Thm2.1}. Suppose $B=p_1^{\alpha_1}p_2^{\alpha_2}\cdots p_s^{\alpha_s}$, where $p_1,p_2,\ldots,p_s$ are distinct primes that are each greater than $r$ and congruent to $1$ modulo $M$ and $\alpha_1,\alpha_2,\ldots,\alpha_s$ are positive integers. If $p>M$ is one of the infinitely many primes that satisfies $(r+1)^{\alpha}p\in G_r$ for all $\alpha\in\mathbb{N}_0$ and $p-r$ has a prime divisor $P$ that does not divide $(r+1)B$, then $(r+1)^{\alpha}Bp\in G_r$ for all nonegative integers $\alpha$.  
\end{theorem}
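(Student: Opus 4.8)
The plan is to fix an arbitrary nonnegative integer $\alpha$, assume toward a contradiction that $S_r(x)=(r+1)^{\alpha}Bp$ for some positive integer $x$, and show this is impossible; since $\alpha$ is arbitrary, that gives $(r+1)^{\alpha}Bp\in G_r$ for all $\alpha\in\mathbb{N}_0$. Because $(r+1)^{\alpha}Bp\neq0$, every prime dividing $x$ exceeds $r$, so, writing $v_q$ for the $q$-adic valuation, $S_r(x)=\prod_{q\mid x}q^{\,v_q(x)-1}(q-r)$ with the product over primes $q\mid x$. I would split into two cases. If $p^2\mid x$, then $p>r$ (as $p\mid x$), so $S_r(p^2)=p(p-r)$ divides $S_r(x)=(r+1)^{\alpha}Bp$, forcing $p-r\mid(r+1)^{\alpha}B$; but $p-r$ has a prime divisor $P$ with $P\nmid(r+1)B$, and since $P$ is prime with $P\nmid r+1$ this gives $P\nmid(r+1)^{\alpha}B$, a contradiction.

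If instead $p^2\nmid x$, then $p$ divides no factor $q^{\,v_q(x)-1}$ in $S_r(x)$ (that would force $q=p$ and $v_p(x)\geq2$), so $p\mid S_r(x)$ yields a prime $q\mid x$ with $p\mid q-r$. Writing $q-r=pd$ and using $q-r=S_r(q)\mid S_r(x)=(r+1)^{\alpha}Bp$, I get $d\mid(r+1)^{\alpha}B$; since $r+1$ is prime and $\gcd(r+1,B)=1$ (a routine point: no prime dividing $B$ can equal $r+1$, because each such prime is $\equiv1\pmod M$ while $M\nmid r$), I may write $d=(r+1)^{t}e$ with $0\leq t\leq\alpha$ and $e\mid B$. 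Let $q_0,\dots,q_N$ be the $N+1$ distinct prime divisors of $M$ as constructed in the proof of Theorem \ref{Thm2.1}. Each prime dividing $B$ is $\equiv1\pmod M$, hence $\equiv1\pmod{q_n}$, so $e\equiv1\pmod{q_n}$ and
\[q=pd+r=(r+1)^{t}\,p\,e+r\equiv(r+1)^{t}p+r\pmod{q_n}\qquad\text{for every }n\in\{0,\dots,N\}.\]

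To finish, I would recall that the primes $p$ furnished by the proof of Theorem \ref{Thm2.1} are terms of the arithmetic progression $A$ obtained from Lemma \ref{Lem2.1} with $m=r+1$, which is legitimate because $r+1$ is not a Mersenne prime, i.e.\ $(r+1)+1$ is not a power of $2$. Then Lemma \ref{Lem2.1}(c) says $(r+1)^{t}p+r$ is divisible by exactly one of $q_0,\dots,q_N$, say $q_n$; combined with the congruence above this gives $q_n\mid q$. But $q$ is prime and $q\geq p+r>M\geq q_n$ because $p>M$, so $q_n\mid q$ is absurd, and the second case is done. The routine parts are the valuation bookkeeping, the verification that $M\nmid r$ (which holds since, e.g., $\ord_{q_1}(r+1)=2$ forces $q_1\nmid r$), and the first case; the step I expect to be the crux is the reduction modulo each $q_n$ in the second case, where the hypothesis that every prime factor of $B$ is $\equiv1\pmod M$ is precisely what collapses the ``$B$-part'' of $q$ and reduces everything to the situation already governed by Lemma \ref{Lem2.1}(c), so that the modulus $M$ from Theorem \ref{Thm2.1} works here verbatim.
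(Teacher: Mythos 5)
Your proposal is correct and follows essentially the same route as the paper: use the prime $P$ to rule out $p^2\mid x$, extract a prime $q\mid x$ with $q-r=pd$, use the congruences $p_i\equiv1\pmod{M}$ to reduce $q$ to $(r+1)^tp+r$ modulo $M$, and invoke Lemma \ref{Lem2.1} together with $p>M$ to get a size contradiction. The only difference is that you spell out a few routine points (the case $p^2\mid x$, the coprimality of $r+1$ and $B$) that the paper leaves implicit.
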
 
\begin{proof} 
Suppose $S_r(x)=(r+1)^{\alpha}Bp$ for some nonnegative integers $\alpha$ and $x$. The existence of $P$ guarantees that $p^2\nmid x$, so there is some prime $q$ such that $p\vert q-r$ and $q\vert x$. Then $q=pd+r$ for some positive integer $d$, so $pd=q-r\vert (r+1)^{\alpha}Bp$. This implies that there exist nonnegative integers $t,\gamma_1,\gamma_2,\ldots,\gamma_s$ such that $pd+r=(r+1)^tpp_1^{\gamma_1}p_2^{\gamma_2}\cdots p_s^{\gamma_s}+r\equiv(r+1)^tp+r\imod{M}$. By Lemma \ref{Lem2.1}, there exists a unique prime divisor $q_i$ of $M$ that divides $(r+1)^tp+r$, so $q_i\vert pd+r=q$. This implies that $q=q_i$, which contradicts the fact that $q=pd+r>Md+r>q_i$.  
\end{proof} 
\begin{theorem} \label{Thm2.3} 
Suppose $r+1$ is a prime and $k$ is a positive integer such that $r+1\nmid k$ and $(r+1)^{\alpha}k\in G_r$ for all nonnegative integers $\alpha$. If $k_1$ and $k_2$ are relatively prime positive integers such that $k_1k_2=k$, then either $(r+1)^{\alpha}k_1\in G_r$ for all nonnegative integers $\alpha$ or $(r+1)^{\alpha}k_2\in G_r$ for all nonnegative integers $\alpha$. 
\end{theorem}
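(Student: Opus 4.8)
The plan is to prove the contrapositive. Assume, toward a contradiction, that there are nonnegative integers $\alpha_1,\alpha_2$ with $(r+1)^{\alpha_1}k_1\notin G_r$ and $(r+1)^{\alpha_2}k_2\notin G_r$, and fix positive integers $x_1,x_2$ with $S_r(x_1)=(r+1)^{\alpha_1}k_1$ and $S_r(x_2)=(r+1)^{\alpha_2}k_2$. Because $S_r$ is multiplicative, it suffices to modify $x_1$ and $x_2$ — always keeping $S_r(x_i)$ of the form $(r+1)^{\beta_i}k_i$ with $\beta_i\in\mathbb{N}_0$ — until $\gcd(x_1,x_2)=1$; at that point $S_r(x_1x_2)=(r+1)^{\beta_1+\beta_2}k_1k_2=(r+1)^{\beta_1+\beta_2}k$, contradicting the hypothesis that $(r+1)^\alpha k\in G_r$ for every $\alpha\in\mathbb{N}_0$.

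First I would normalize. Since $r+1$ is a prime not dividing $k$, it divides neither $k_1$ nor $k_2$, so $v_{r+1}(S_r(x_i))=\alpha_i$; writing $x_i=(r+1)^{e_i}w_i$ with $r+1\nmid w_i$ and using $S_r((r+1)^{e_i})=(r+1)^{e_i-1}$ when $e_i\geq 1$, I may replace $x_i$ by $w_i$ and $\alpha_i$ by $\alpha_i-e_i+1$ (which is nonnegative, as $(r+1)^{e_i-1}$ divides $S_r(x_i)$). Thus I reduce to the case $r+1\nmid x_1x_2$. The key step is then a rigidity statement about common prime divisors: if a prime $p$ divides both $x_1$ and $x_2$, then $p>r$ and $p\neq r+1$, and $p-r$ divides both $S_r(x_1)=(r+1)^{\alpha_1}k_1$ and $S_r(x_2)=(r+1)^{\alpha_2}k_2$, so any prime $\ell\neq r+1$ dividing $p-r$ would divide $\gcd(k_1,k_2)=1$; hence $p-r=(r+1)^m$ for some $m\geq 1$. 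Likewise $p$ cannot divide both $x_1$ and $x_2$ to a power at least $2$, since that would put $p$ into both $k_1$ and $k_2$.

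Given this, each common prime $p$ can be removed by a single deletion: choose $i$ with $v_p(x_i)=1$, replace $x_i$ by $x_i/p$, and note $S_r(x_i/p)=S_r(x_i)/(p-r)=(r+1)^{\alpha_i-m}k_i$ with $\alpha_i\geq m$ (the quotient is an integer and $r+1\nmid k_i$). This introduces no new prime — in particular it does not reintroduce $r+1$ — so the set of common primes strictly shrinks, the rigidity statement reapplies at each stage, and after finitely many steps $x_1$ and $x_2$ are coprime, which completes the argument. I expect the main obstacle to be the rigidity statement of the second paragraph: without the constraint $p-r=(r+1)^m$, deleting a common prime from $x_1$ would replace $S_r(x_1)$ by $S_r(x_1)/(p-r)$, which in general is not of the form $(r+1)^{*}k_1$, and the induction would break. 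Coprimality of $k_1$ and $k_2$ is precisely what pins $p-r$ down to a power of $r+1$; everything else is routine bookkeeping — checking that the exponents of $r+1$ stay nonnegative (which is where $r+1\nmid k$ enters) and that the deletion process terminates.
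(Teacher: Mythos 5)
Your argument is correct and is essentially the paper's: both proofs hinge on the observation that a common prime divisor $p$ of the two witnesses forces $p-r$ to divide both $(r+1)^{\alpha_1}k_1$ and $(r+1)^{\alpha_2}k_2$, so that $\gcd(k_1,k_2)=1$ pins $p-r$ down to a power of $r+1$, after which coprime witnesses are produced and multiplied. The only difference is bookkeeping: where you normalize away $r+1$ first and then remove the remaining common primes by an explicit descent (using that at least one of $v_p(x_1),v_p(x_2)$ equals $1$), the paper takes $\alpha_1,\alpha_2$ minimal to show such primes must divide each $x_i$ to at least the second power, derives $\gcd(x_1,x_2)=(r+1)^{\gamma}$ from $S_r(d)\mid(r+1)^{\min(\alpha_1,\alpha_2)}$, and strips the power of $r+1$ at the end --- two equivalent formulations of the same idea.
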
 
\begin{proof} 
Suppose, for the sake of finding a contradiction, that there exist nonnegative integers $k_1,k_2,\alpha_1,\alpha_2,x_1,x_2$ such that $\gcd(k_1,k_2)=1$, $k_1k_2=k$, $S_r(x_1)=(r+1)^{\alpha_1}k_1$, and $S_r(x_2)=(r+1)^{\alpha_2}k_2$. We may assume that $\alpha_1$ and $\alpha_2$ are minimal with respect to these properties. Suppose $p=(r+1)^t+r$ is prime for some positive integer $t$. If $p^1\parallel x_1$, then we may write $x_1=p\mu$, where $\mu\in\mathbb{N}$ and $p\nmid\mu$. We then have $S_r(x_1)=(p-r)S_r(\mu)=(r+1)^tS_r(\mu)=(r+1)^{\alpha_1}k_1$, so $t\leq\alpha_1$ and $S_r(\mu)=(r+1)^{\alpha_1-t}k_1$, which contradicts the minimality of $\alpha_1$. Thus, if $p\vert x_1$, then $p^2\vert x_1$. By the same token, if $p\vert x_2$, then $p^2\vert x_2$. Let us write $d=\gcd(x_1,x_2)$ so that $S_r(d)\vert\gcd(S_r(x_1),S_r(x_2))=(r+1)^{\min(\alpha_1,\alpha_2)}$. Then $S_r(d)=(r+1)^{\beta}$ for some nonnegative integer $\beta$, so we may write $d=(r+1)^{\gamma}\lambda$, where $\gamma\in\mathbb{N}_0$ and $\lambda$ is a (possibly empty) product of distinct primes of the form $(r+1)^t+r$ ($t\in\mathbb{N}$). If $p=(r+1)^t+r$ is prime for some $t\in\mathbb{N}$ and $p\vert\lambda$, then $p\vert x_1,x_2$. This implies that $p^2\vert x_1,x_2$, so $p^2\vert d$. However, this contradicts the fact that $\lambda$ is a product of distinct primes, so we conclude that $\lambda=1$. Either $(r+1)^{\gamma+1}\nmid x_1$ or $(r+1)^{\gamma+1}\nmid x_2$, so we may assume, without loss of generality, that $(r+1)^{\gamma+1}\nmid x_1$. Then $x_1=(r+1)^{\gamma}y$, where $y\in\mathbb{N}$ and $r+1\nmid y$. Because $(r+1)^{\alpha_1}k_1=S_r(x_1)=S_r((r+1)^{\gamma})S_r(y)=(r+1)^{\gamma-1}S_r(y)$, we find that $S_r(y)=(r+1)^{\alpha_1-\gamma+1}k_1$. However, $x_2$ and $y$ are relatively prime, so $S_r(x_2y)=S_r(x_2)S_r(y)=(r+1)^{\alpha_1+\alpha_2-\gamma+1}k_1k_2=(r+1)^{\alpha_1+\alpha_2-\gamma+1}k$. This contradicts the hypothesis that $(r+1)^{\alpha}k\in G_r$ for all nonnegative integers $\alpha$, so the proof is complete.    
\end{proof} 
\section{Schemmel Nontotient Numbers \\ of Odd Order}
\begin{theorem} \label{Thm3.1} 
Let $r$ be an odd positive integer, and write $n=2p_1^{\alpha_1}p_2^{\alpha_2}\cdots p_s^{\alpha_s}$, where, for all $i,j\in\{1,2,\ldots,s\}$ with $i<j$, $p_i$ and $p_j$ are odd primes, $\alpha_i$ and $\alpha_j$ are positive integers, and $p_i<p_j$. Then $n$ is a Schemmel nontotient number of order $r$ if and only if $n+r$ is composite and $p_s-r\neq 2p_1^{\alpha_1}p_2^{\alpha_2}\cdots p_{s-1}^{\alpha_{s-1}}$. 
\end{theorem}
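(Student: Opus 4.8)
The plan is to pin down the preimages of $n$ under $S_r$. We may assume $s\ge 1$, since the case $n=2$ is immediate ($2\in G_r$ exactly when $r+2$ is composite, in agreement with the theorem's vacuous condition on $p_s$). Suppose $S_r(x)=n$ for some positive integer $x$. Because $n=2m$ with $m$ odd and $r$ is odd, I would first show that $x$ has exactly one odd prime factor: when $r\ge 3$ the facts recalled in the introduction force $x$ to be odd with each prime factor exceeding $r$, while when $r=1$ a factor $2^{\epsilon}$ is also allowed. Writing the odd part of $x$ as $q_1^{\beta_1}\cdots q_k^{\beta_k}$ with each $q_i>r$, one has $v_2\bigl(S_r(x)\bigr)=v_2\bigl(S_r(2^{\epsilon})\bigr)+\sum_{i=1}^k v_2(q_i-r)$, and since each $q_i-r$ is even while $v_2(n)=1$, this forces $k\le 1$ and, when $r=1$, also $S_r(2^{\epsilon})=1$. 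The case $k=0$ would make $S_r(x)$ equal to $0$, $1$, or a power of $2$, and none of these equals $n$ (as $n$ is not a power of $2$). Hence $S_r(x)=n$ forces $q^{\beta-1}(q-r)=n$ for a single odd prime $q>r$ and some $\beta\ge 1$.

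From this reduced equation I would split on $\beta$. If $\beta=1$ then $n+r=q$ is prime. If $\beta\ge 2$ then $q^{\beta-1}\mid n$ with $q$ odd, so $q=p_i$ for some $i$ and $\beta-1\le\alpha_i$; moreover $q-r=n/q^{\beta-1}<q=p_i$, which rearranges to $2\prod_{j\ne i}p_j^{\alpha_j}<p_i$. Since $p_s$ is the largest of the $p_j$, this inequality fails unless $i=s$ (it is automatic when $s=1$). With $i=s$, writing $p_s-r=2p_1^{\alpha_1}\cdots p_{s-1}^{\alpha_{s-1}}p_s^{\,\alpha_s-\beta+1}$ and noting $p_s\nmid p_s-r$ (because $0<r<p_s$) forces the exponent of $p_s$ to vanish, so $\beta=\alpha_s+1$ and $p_s-r=2p_1^{\alpha_1}\cdots p_{s-1}^{\alpha_{s-1}}$. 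This establishes the forward direction: if $n\notin G_r$, then $n+r$ is prime or $p_s-r=2p_1^{\alpha_1}\cdots p_{s-1}^{\alpha_{s-1}}$.

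For the converse I would exhibit explicit preimages. If $n+r$ is prime, then (since $n$ is even and $r$ odd) $n+r$ is an odd prime exceeding $r$, whence $S_r(n+r)=n$. If $p_s-r=2p_1^{\alpha_1}\cdots p_{s-1}^{\alpha_{s-1}}$, then this value is at least $2$, so $p_s>r$ and $S_r\bigl(p_s^{\alpha_s+1}\bigr)=p_s^{\alpha_s}(p_s-r)=n$. In either case $n\notin G_r$. Since $n+r\ge 6$, ``$n+r$ composite'' coincides with ``$n+r$ not prime,'' so combining the two directions yields the claimed equivalence.

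The step I expect to be the main obstacle is the structural reduction at the start: making the $2$-adic valuation argument airtight uniformly in $r$ — in particular keeping track of the stray power of $2$ that appears only when $r=1$ — and confirming that having a single odd prime factor genuinely pins $x$ down to $q^{\beta}$ (times an irrelevant $2^{\epsilon}$ when $r=1$). Once $q^{\beta-1}(q-r)=n$ is secured, the remainder is an elementary size-and-divisibility comparison resting only on $p_s$ being the largest prime factor of $n/2$ and on $r<p_s$.
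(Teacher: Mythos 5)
Your proof is correct and follows essentially the same route as the paper's: a $2$-adic valuation argument reduces $S_r(x)=n$ to $x$ being (essentially) a prime power $q^{\beta}$, after which casing on $\beta$ yields exactly the two conditions, and the converse is handled by the same explicit preimages $n+r$ and $p_s^{\alpha_s+1}$. You are in fact somewhat more thorough than the paper, which defers $r=1$ to Zhang and asserts without justification that $\alpha>1$ forces $p=p_s$ and $\alpha-1=\alpha_s$; your size-and-divisibility comparison supplies that missing step.
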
 
\begin{proof} 
Zhang has proven the case $r=1$ \cite{Zhang93}, so we may assume that $r\geq 3$. First, suppose $S_r(x)=n$ for some positive integer $x$. Note that $S_r(p^{\alpha})$ is even for any prime $p$ and positive integer $\alpha$. Therefore, we know that $\omega(x)=1$, so we may write $x=p^{\alpha}$ for some prime $p$ and positive integer $\alpha$. If $\alpha=1$, then $n+r=p$. If $\alpha>1$, then we must have $p=p_s$ and $\alpha-1=\alpha_s$ because $n=p^{\alpha-1}(p-r)$. This implies that we must have $p_s-r=p-r=2p_1^{\alpha_1}p_2^{\alpha_2}\cdots p_{s-1}^{\alpha_{s-1}}$. Hence, if $n+r$ is composite and $p_s-r \neq 2p_1^{\alpha_1}p_2^{\alpha_2}\cdots p_{s-1}^{\alpha_{s-1}}$, then $n\in G_r$. 
\par 
Conversely, suppose that $n+r$ is prime or $p_s-r=2p_1^{\alpha_1}p_2^{\alpha_2}\cdots p_{s-1}^{\alpha_{s-1}}$. If $n+r$ is prime, then $S_r(n+r)=n$, so $n\not\in G_r$. If $p_s-r=2p_1^{\alpha_1}p_2^{\alpha_2}\cdots p_{s-1}^{\alpha_{s-1}}$, then $S_r(p_s^{\alpha_s+1})=n$, so $n\not\in G_r$. 
\end{proof} 
\begin{theorem} \label{Thm3.2}
Suppose $r$ is an odd positive integer and $n=2^{\alpha}p_1^{\alpha_1}p_2^{\alpha_2}\cdots p_s^{\alpha_s}$, where $p_1,p_2,\ldots,p_s$ are distinct odd primes that are each greater than $r$, $\alpha,\alpha_1,\alpha_2,\ldots,\alpha_s$ are positive integers, and $2^tp_1^{\gamma}+r$ is composite for all $t\in\{1,2,\ldots,\alpha\}$ and $\gamma\in\{1,2,\ldots,\alpha_1\}$. For each $t\in\{1,2,\ldots,\alpha\}$ and $\gamma\in\{1,2,\ldots,\alpha_1\}$, let $q_{t,\gamma}$ be a prime divisor of $2^tp^{\gamma}+r$, and let $M$ be the least common multiple of all such $q_{t,\gamma}$. If $p_i\equiv 1\imod{M}$ for all $i\in\{2,3,\ldots,s\}$ and $p_1-r\nmid n$, then $n$ is a Schemmel nontotient number of order $r$. 
\end{theorem}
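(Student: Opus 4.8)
The plan is to suppose for contradiction that $S_r(x) = n$ for some positive integer $x$ and to extract enough of the factorization of $x$ to contradict one of the three hypotheses. As in the proof of Theorem \ref{Thm3.1}, I would first dispatch the case $r = 1$ to Zhang and assume $r \ge 3$; then $S_r(2^a) = 0$ for every $a \ge 1$, and $S_r(p^b) = p^{b-1}(p-r)$ is even for every prime $p > r$, so every prime divisor of $x$ is an odd prime exceeding $r$. Writing $x = \ell_1^{b_1}\cdots\ell_k^{b_k}$ with the $\ell_j$ distinct such primes, multiplicativity gives $n = \prod_{j=1}^{k}\ell_j^{b_j-1}(\ell_j - r)$.

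The key reductions are as follows. First, $p_1 \nmid x$: otherwise $S_r(p_1) = p_1 - r$ would divide $S_r(x) = n$, contradicting $p_1 - r \nmid n$. Since $p_1^{\alpha_1} \mid n$ with $\alpha_1 \ge 1$, while $\prod_j \ell_j^{b_j-1}$ is coprime to $p_1$ (each $\ell_j$ being a prime distinct from $p_1$), $p_1$ must divide $\ell_{j_1} - r$ for some index $j_1$. Because $\ell_{j_1} - r = S_r(\ell_{j_1})$ divides $n$ and is even, I would write $\ell_{j_1} - r = 2^{c}p_1^{e_1}p_2^{e_2}\cdots p_s^{e_s}$ with $1 \le c \le \alpha$, $1 \le e_1 \le \alpha_1$, and $0 \le e_i \le \alpha_i$, so that $\ell_{j_1} = 2^{c}p_1^{e_1}\prod_{i\ge 2}p_i^{e_i} + r$. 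Now split on whether any $p_i$ with $i \ge 2$ appears: if $e_2 = \cdots = e_s = 0$, then $\ell_{j_1} = 2^{c}p_1^{e_1} + r$, which is composite by hypothesis, contradicting the primality of $\ell_{j_1}$. Otherwise some $e_{i_0} \ge 1$, and I would reduce modulo the prime $q_{c,e_1}$, which divides $M$; since $p_i \equiv 1 \imod{M}$ for every $i \ge 2$, this yields $\ell_{j_1} \equiv 2^{c}p_1^{e_1} + r \equiv 0 \imod{q_{c,e_1}}$, forcing the prime $\ell_{j_1}$ to equal $q_{c,e_1}$; but $q_{c,e_1} \le 2^{c}p_1^{e_1} + r < 2^{c}p_1^{e_1}\prod_{i\ge 2}p_i^{e_i} + r = \ell_{j_1}$ because $\prod_{i\ge 2}p_i^{e_i} \ge p_{i_0} > 1$, a contradiction. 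Hence no such $x$ exists, so $n \in G_r$.

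The argument is short once set up, and I expect the only delicate point to be bookkeeping: one must check that the exponents $c$ and $e_1$ read off from $\ell_{j_1} - r$ genuinely lie in $\{1,\dots,\alpha\}$ and $\{1,\dots,\alpha_1\}$, so that $2^{c}p_1^{e_1} + r$ really is among the numbers assumed composite and $q_{c,e_1}$ really is one of the primes whose lcm is $M$; this is immediate from $(\ell_{j_1} - r) \mid n = 2^{\alpha}p_1^{\alpha_1}\cdots p_s^{\alpha_s}$. Each hypothesis then enters exactly once: $p_1 - r \nmid n$ forces $p_1$ into $n$ only through a factor $\ell_{j_1} - r$; the compositeness of $2^{t}p_1^{\gamma} + r$ kills the case where no further $p_i$ occurs; and $p_i \equiv 1 \imod{M}$ kills the remaining case through a size comparison after reducing modulo $q_{c,e_1}$.
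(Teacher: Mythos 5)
Your argument is correct and follows essentially the same route as the paper: use $p_1-r\nmid n$ to force $p_1$ to enter $n$ through a factor $q-r$ for some prime $q\mid x$, write $q-r=2^{t}p_1^{\gamma}p_2^{\gamma_2}\cdots p_s^{\gamma_s}$ as a divisor of $n$ with $1\le t\le\alpha$ and $1\le\gamma\le\alpha_1$, reduce modulo $q_{t,\gamma}$ using $p_i\equiv 1\pmod{M}$, and derive a size contradiction. The one thing to repair is your opening reduction: you cannot dispatch $r=1$ to Zhang, since (as Remark \ref{Rem3.1} notes) this statement is a slight generalization of Zhang's Theorem 3 even when $r=1$; fortunately your main argument never really uses that $x$ is odd --- only that $\ell_{j_1}\neq p_1$ and $\ell_{j_1}>r$ --- so it covers $r=1$ directly and the appeal to Zhang can simply be deleted. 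Your two-case split is also unnecessary: the paper merges both cases by noting $q=p_1d+r\ge 2^{t}p_1^{\gamma}+r>q_{t,\gamma}$, the strict inequality holding because $2^{t}p_1^{\gamma}+r$ is composite and $q_{t,\gamma}$ is a proper prime divisor of it.
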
 
\begin{proof} 
Suppose $S_r(x)=n$ for some positive integer $x$. Because $p_1-r\nmid n$, we see that $p_1^2\nmid x$. Thus, there exists a prime $q$ such that $p_1\vert q-r$ and $q\vert x$. Then $q=p_1d+r$ for some positive integer $d$, and we have $p_1d\vert 2^{\alpha}p_1^{\alpha_1}p_2^{\alpha_2}\cdots p_s^{\alpha_s}$. We may write $p_1d+r=2^tp_1^{\gamma}p_2^{\gamma_2}\cdots p_s^{\gamma_s}+r\equiv 2^tp_1^{\gamma}+r\imod{M}$, so $q=p_1d+r\equiv 2^tp_1^{\gamma}+r\equiv 0\imod{q_{t,\gamma}}$. This implies that $q=q_{t,\gamma}$, which contradicts the fact that $q=p_1d+r\geq2^tp_1^{\gamma}+r>q_{t,\gamma}$. 
\end{proof} 
\begin{remark} \label{Rem3.1} 
Even within the case $r=1$, Theorem \ref{Thm3.2} provides a slight generalization of Theorem $3$ in \cite{Zhang93}.
\end{remark} 
The proof of the following theorem utilizes Mendelsohn's original observations concerning the orders of the number $2$ modulo certain primes \cite{Mendelsohn76}. 
\begin{theorem} \label{Thm3.3} 
Suppose $r$ is an odd positive integer not divisible by $3$, $5$, $17$, $257$, $641$, $65537$, or $6700417$. There exist infinitely many primes $p$ such that $2^{\alpha}p$ is a Schemmel nontotient number of order $r$ for all nonnegative integers $\alpha$. 
\end{theorem}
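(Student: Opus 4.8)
I would first isolate a characterization analogous to Lemma~\ref{Lem2.2} and then build a suitable arithmetic progression out of Mendelsohn's observations on the orders of $2$ modulo the prime factors of the Fermat numbers $F_0,\ldots,F_5$; the pieces are then assembled with the Chinese Remainder Theorem, Dirichlet's theorem, and the Prime Number Theorem for arithmetic progressions.

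The reduction I have in mind is the following. Suppose $S_r(x)=2^{\alpha}p$ for some positive integer $x$ and some $\alpha\in\mathbb{N}_0$. Since $S_r(x)\neq 0$, every prime factor of $x$ exceeds $r$, so when $r\geq 3$ the integer $x$ is odd; when $r=1$ one argues with the odd part $y$ of $x$, noting that $\phi(y)=2^{\alpha'}p$ for some $\alpha'\leq\alpha$. Writing $S_r(x)=\prod q^{\beta-1}(q-r)$ over the prime powers $q^{\beta}\parallel x$ and using $p\mid S_r(x)$, the prime $p$ must divide $q^{\beta-1}$ or $q-r$ for one of these $q$. In the former case $p=q$ and $\beta\geq 2$, so $p(p-r)=S_r(p^2)\mid 2^{\alpha}p$ and hence $p-r\mid 2^{\alpha}$; as $p$ and $r$ are odd, $p-r$ is a positive power of $2$. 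In the latter case $q-r=S_r(q)\mid 2^{\alpha}p$ while $p\mid q-r$, so $q-r=2^{c}p$ for some $0\leq c\leq\alpha$, and the parity of $q-r$ forces $c\geq 1$, so $q=r+2^{c}p$ is prime. Thus it suffices to produce infinitely many primes $p$ such that $p-r$ is not a power of $2$ and $r+2^{c}p$ is composite for every positive integer $c$.

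For the construction I would invoke Mendelsohn's computations $\ord_{3}(2)=2$, $\ord_{5}(2)=4$, $\ord_{17}(2)=8$, $\ord_{257}(2)=16$, $\ord_{65537}(2)=32$ and, since $2^{32}+1=641\cdot 6700417$, also $\ord_{641}(2)=\ord_{6700417}(2)=64$. Set $q_0=6700417$, $q_1=3$, $q_2=5$, $q_3=17$, $q_4=257$, $q_5=65537$, $q_6=641$, $M=\prod_{n=0}^{6}q_n$, and consider the system $p\equiv -r\imod{q_0}$ together with $p\equiv r\imod{q_n}$ for $1\leq n\leq 6$. Exactly as in the proof of Lemma~\ref{Lem2.1}, for every $t\in\mathbb{N}_0$ one has $2^{t}\equiv 1\imod{q_0}$ whenever $64\mid t$ (in particular when $t=0$), and $2^{t}\equiv -1\imod{q_{\beta+1}}$ whenever $t=2^{\beta}\mu$ with $\mu$ odd and $0\leq\beta\leq 5$; hence any solution $p$ of the system satisfies $q_n\mid 2^{t}p+r$ for some $n\in\{0,1,\ldots,6\}$, for every $t\in\mathbb{N}_0$. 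The hypothesis that $r$ is divisible by none of $3,5,17,257,641,65537,6700417$ makes each of the residues $-r$ and $r$ above a unit modulo the relevant prime, so the Chinese Remainder Theorem shows the solutions form an arithmetic progression with common difference $M$ and terms coprime to $M$, and Dirichlet's theorem yields infinitely many primes $p$ among them. For any such $p$ with $p>M$ and any $t\in\mathbb{N}_0$, the integer $2^{t}p+r$ has a divisor $q_n$ with $q_n\leq M<p<2^{t}p+r$, so $2^{t}p+r$ is composite.

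To finish I would remove the primes $p$ for which $p-r$ is a power of $2$: by the Prime Number Theorem for arithmetic progressions the progression above contains $\sim\phi(M)^{-1}x/\log x$ primes up to $x$, whereas the integers $r+2^{c}$ up to $x$ number only $O(\log x)$, so infinitely many of the primes produced above also satisfy that $p-r$ is not a power of $2$. By the reduction, every such prime $p$ has $S_r(x)\neq 2^{\alpha}p$ for all $x$ and all $\alpha\in\mathbb{N}_0$, i.e.\ $2^{\alpha}p\in G_r$ for all $\alpha$, which is the assertion. The step I expect to be most delicate is the reduction itself: as with Lemma~\ref{Lem2.2}, one has to identify the full list of exceptional forms of $p$, the key being that a prime $p$ dividing $S_r(x)$ must show up either through a factor $q-r$ or through $p^{2}\mid x$; once that is in hand, the rest is a routine adaptation of the arguments behind Lemmas~\ref{Lem2.1} and~\ref{Lem2.2}.
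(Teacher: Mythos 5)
Your proposal is correct and follows essentially the same route as the paper: the same seven primes $q_0=6700417,q_1=3,\ldots,q_6=641$ with $\ord_{q_n}(2)=2^n$ and $\ord_{q_0}(2)=2^6$, the same covering system (your congruences $p\equiv r\imod{q_n}$, $p\equiv-r\imod{q_0}$ are exactly the paper's system \eqref{Eq3.1} after using $2^{2^{n-1}}\equiv-1\imod{q_n}$), and the same assembly via the Chinese Remainder Theorem, Dirichlet, the count of primes of the form $2^t+r$, and the dichotomy $p^2\mid x$ versus $q-r=2^tp$. No substantive differences.
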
 
\begin{proof} 
Let use write $q_0=6700417$, $q_1=3$, $q_2=5$, $q_3=17$, $q_4=257$, $q_5=65537$, and $q_6=641$. Observe that $\ord_{q_n}(2)=2^n$ for each positive integer $n\leq 6$ and $\ord_{q_0}(2)=2^6$. Now consider the system of congruences 
\begin{equation} \label{Eq3.1} 
\begin{cases} x+r\equiv 0\imod{q_n}, & \mbox{if } n=0 \\ 2^{2^{n-1}}x+r\equiv 0\imod{q_n}, & \mbox{if } n\in\{1,2,\ldots,6\}. \end{cases}
\end{equation} 
By the Chinese Remainder Theorem, the positive solutions to \eqref{Eq3.1} are precisely the terms of an arithmetic progression with common difference \\ 
$\displaystyle{M=\prod_{i=0}^6q_i}$. Furthermore, any solution to \eqref{Eq3.1} is relatively prime to $M$ because $q_n\nmid r$ for all $n\in\{0,1,\ldots,6\}$. Following an argument virtually identical to that used in the proof of Lemma \ref{Lem2.1}, we see that if $x$ is any solution to the system of congruences \eqref{Eq3.1}, then, for any nonnegative integer $t$, $2^tx+r$ is divisible by precisely one element of the set $\{q_0,q_1,\ldots,q_6\}$. Furthermore, there are infinitely many prime solutions to \eqref{Eq3.1} that are not of the form $2^t+r$ ($t\in\mathbb{N}_0$). Therefore, there are infinitely many primes $p$ such that $p\neq 2^t+r$ and $2^tp+r\not\in\mathbb{P}$ for all nonnegative integers $t$. Fix one such prime $p$ and suppose, for the sake of finding a contradiction, that $S_r(x)=2^{\alpha}p$ for some nonnegative integers $x$ and $\alpha$. If $p^2\vert x$, then $p-r\vert2^{\alpha}$, which is a contradiction. If $p^2\nmid x$, then there exists some prime $q$ such that $q\vert x$ and $q-r=2^tp$ for some $t\in\mathbb{N}_0$, which is also a contradiction.   
\end{proof} 
\begin{theorem} \label{Thm3.4}
Let $r$ and $k$ be odd positive integers such that $2^{\alpha}k\in G_r$ for all nonnegative integers $\alpha$. Let $k_1$ and $k_2$ be relatively prime positive integers such that $k_1k_2=k$. Either $2^{\alpha}k_1\in G_r$ for all nonnegative integers $\alpha$ or $2^{\alpha}k_2\in G_r$ for all nonnegative integers $\alpha$
\end{theorem}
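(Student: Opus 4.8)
The plan is to imitate the proof of Theorem~\ref{Thm2.3}, replacing the prime $r+1$ there by $2$ and replacing the primes of the form $(r+1)^t+r$ by primes of the form $2^t+r$. Since Zhang has established the case $r=1$ \cite{Zhang93}, I would assume $r\geq 3$. This restriction actually streamlines the argument: when $r\geq 3$ we have $2\leq r$, so $2$ never divides any $x$ with $S_r(x)>0$, and consequently there is no analogue of the ``$(r+1)$-part of $x$'' that had to be peeled off in the proof of Theorem~\ref{Thm2.3}.

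So suppose, for the sake of contradiction, that there are relatively prime positive integers $k_1,k_2$ with $k_1k_2=k$, nonnegative integers $\alpha_1,\alpha_2$, and positive integers $x_1,x_2$ such that $S_r(x_1)=2^{\alpha_1}k_1$ and $S_r(x_2)=2^{\alpha_2}k_2$, chosen so that $\alpha_1+\alpha_2$ is as small as possible. Because $S_r(x_i)>0$, neither $x_1$ nor $x_2$ has a prime divisor that is at most $r$; in particular, both are odd. The first key step is to show that if $p=2^t+r$ is prime (necessarily with $t\geq 1$, as $p-r$ is a positive even number) and $p\mid x_1$, then $p^2\mid x_1$. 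Indeed, if $p^1\parallel x_1$, write $x_1=p\mu$ with $p\nmid\mu$; then $2^{\alpha_1}k_1=S_r(x_1)=(p-r)S_r(\mu)=2^tS_r(\mu)$, and since $k_1$ is odd this gives $t\leq\alpha_1$ and $S_r(\mu)=2^{\alpha_1-t}k_1$. Replacing $(\alpha_1,x_1)$ by $(\alpha_1-t,\mu)$ then contradicts the minimality of $\alpha_1+\alpha_2$. The same argument applies to $x_2$.

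Now set $d=\gcd(x_1,x_2)$. Since $d\mid x_1$ and $d\mid x_2$, we have $S_r(d)\mid\gcd(S_r(x_1),S_r(x_2))=2^{\min(\alpha_1,\alpha_2)}$, using $\gcd(k_1,k_2)=1$ together with the divisibility property of $S_r$; hence $S_r(d)$ is a power of $2$. Suppose $d>1$ and let $p$ be a prime divisor of $d$, with $p^b\parallel d$. Then $p>r$, so $p$ is odd, and $S_r(p^b)=p^{b-1}(p-r)$ divides the power of two $S_r(d)$; this forces $b=1$ and $p-r=2^t$ for some $t\geq 1$, so that $p=2^t+r$ is prime. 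By the previous step, $p^2\mid x_1$ and $p^2\mid x_2$, hence $p^2\mid d$, contradicting $p^1\parallel d$. Therefore $d=1$, so $x_1$ and $x_2$ are relatively prime, and $S_r(x_1x_2)=S_r(x_1)S_r(x_2)=2^{\alpha_1+\alpha_2}k_1k_2=2^{\alpha_1+\alpha_2}k$, contradicting the hypothesis that $2^{\alpha}k\in G_r$ for every nonnegative integer $\alpha$.

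The step that requires the most care is the classification of those $d$ with $S_r(d)$ a power of $2$: one must use that every prime divisor of $d$ exceeds $r$ (and is therefore odd) in order to conclude that $d$ is squarefree with each prime divisor of the form $2^t+r$. I do not expect a genuine obstacle here; the only real subtlety is the boundary with the case $r=1$, where $2=r+1$ can legitimately divide $x_i$ because $S_1(2^m)=2^{m-1}$, so $d$ need not be squarefree — which is precisely why that case is handled separately by appealing to Zhang.
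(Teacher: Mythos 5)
Your proof is correct and follows essentially the same route as the paper's: assume $r>1$ by Zhang's result, take $\alpha_1,\alpha_2$ minimal, set $d=\gcd(x_1,x_2)$, observe $S_r(d)\mid 2^{\min(\alpha_1,\alpha_2)}$ forces $d$ to be squarefree with prime divisors of the form $2^t+r$, rule these out, and multiply $S_r(x_1)S_r(x_2)$ for the contradiction. In fact your treatment of the step $d=1$ is slightly more careful than the paper's, which writes ``if $p\mid x_1$ then $x_1=p\mu$ with $p\nmid\mu$'' and thereby glosses over the possibility $p^2\mid x_1$; you handle that case exactly as in the paper's proof of Theorem~\ref{Thm2.3}, by deducing $p^2\mid d$ and contradicting squarefreeness.
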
 
\begin{proof} 
Zhang proves the case $r=1$ as Theorem 4 in \cite{Zhang93}, so we will assume $r>1$. Suppose that there exist nonnegative integers $\alpha_1,\alpha_2,x_1,x_2$ such that $S_r(x_1)=2^{\alpha_1}k_1$ and $S_r(x_2)=2^{\alpha_2}k_2$, and assume that $\alpha_1$ and $\alpha_2$ are minimal with respect to these properties. Note that $x_1$ and $x_2$ must be odd because $r>1$. Write $d=\gcd(x_1,x_2)$. Then $S_r(d)\vert\gcd(S_r(x_1),S_r(x_2))=2^{\min(\alpha_1,\alpha_2)}$, which means that $d$ is a (possibly empty) product of distinct primes that are each $r$ more than a power of $2$.  Suppose $t$ is a nonnegative integer such that $p=2^t+r$ is prime. If $p\vert x_1$, then we may write $x_1=p\mu$, where $\mu\in\mathbb{N}$ and $p\nmid\mu$. This implies that $2^{\alpha_1}k_1=S_r(x_1)=S_r(p)S_r(\mu)=2^tS_r(\mu)$, so $S_r(\mu)=2^{\alpha_1-t}k_1$, which contradicts the minimality of $\alpha_1$. We reach a similar contradiction if we assume $p\vert x_2$. Thus, $d=1$, so $S_r(x_1x_2)=S_r(x_1)S_r(x_2)=2^{\alpha_1+\alpha_2}k_1k_2=2^{\alpha_1+\alpha_2}k$. This contradicts the fact that $2^{\alpha}k\in G_r$ for all nonnegative integers $\alpha$.   
\end{proof} 
\section{Concluding Remarks} 
Clearly, we have only scratched the surface of the topic of Schemmel nontotient numbers, so we encourage the reader to continue the exploration. Indeed, except for Theorem \ref{Thm1.1}, we have not even considered Schemmel nontotient numbers of order $r$ when $r+1$ is odd and composite. Furthermore, after acknowledging the restrictions listed in the hypothesis of Theorem \ref{Thm2.1}, we make the following conjecture. 
\begin{conjecture} \label{Conj4.1} 
If $r+1$ is prime, then there are infinitely many primes $p$ such that $(r+1)^{\alpha}p\in G_r$ for all nonnegative integers $\alpha$. 
\end{conjecture}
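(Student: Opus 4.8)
The plan is to establish the conjecture by extending the covering–congruence argument behind Theorems~\ref{Thm2.1} and \ref{Thm3.3} so that it no longer relies on the special orders $2^N$. Write $a=r+1$. By Lemma~\ref{Lem2.2}, $a^{\alpha}p\in G_r$ for all $\alpha\in\mathbb{N}_0$ precisely when $p\neq a^t+r$ and $a^tp+r\notin\mathbb{P}$ for every $t\in\mathbb{N}_0$; since only $O(\log x)$ integers up to $x$ have the form $a^t+r$, it suffices to exhibit a single arithmetic progression containing infinitely many primes such that every sufficiently large prime term $p$ of it has $a^tp+r$ composite for all $t\in\mathbb{N}_0$. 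The case $r=1$ is already handled by Theorem~\ref{Thm3.3}, so assume $r\geq 2$ and hence $a\geq 3$.

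The construction would proceed as follows. Fix a covering system of $\mathbb{Z}$, that is, finitely many congruences $t\equiv a_i\imod{m_i}$ $(1\leq i\leq k)$ satisfied by every integer, chosen so that the moduli $m_i$ are pairwise distinct and each $m_i\geq 2$ (and, when $a$ is a Mersenne prime, each $m_i\geq 3$). By Zsigmondy's Theorem, for every integer $m\geq 2$ the number $a^m-1$ has a primitive prime divisor---equivalently, there is a prime $q$ with $\ord_q(a)=m$---the only exceptions being $m=2$ when $a+1$ is a power of $2$ (i.e.\ $a$ is a Mersenne prime) and $(a,m)=(2,6)$, neither of which is in force for our choices. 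For each $i$ select a prime $q_i$ with $\ord_{q_i}(a)=m_i$; the $q_i$ are pairwise distinct because their orders are, and $\ord_{q_i}(a)=m_i\geq 2$ forces $q_i\nmid a$ and $q_i\nmid a-1=r$. Put $M=\prod_{i=1}^{k}q_i$. By the Chinese Remainder Theorem the simultaneous congruences $a^{a_i}p\equiv -r\imod{q_i}$ $(1\leq i\leq k)$ define a single residue class $p\equiv c\imod{M}$, and $\gcd(c,M)=1$ since $q_i\nmid r$ prevents $q_i\mid c$. If $p>M$ is a prime in this class and $t\in\mathbb{N}_0$, then choosing $i$ with $t\equiv a_i\imod{m_i}$ and using $\ord_{q_i}(a)=m_i$ gives $a^tp\equiv a^{a_i}p\equiv -r\imod{q_i}$, so $q_i\mid a^tp+r$; as $a^tp+r\geq p+r>M\geq q_i$, the number $a^tp+r$ is composite. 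Dirichlet's theorem supplies infinitely many primes in the class $c\imod{M}$, and the Prime Number Theorem for arithmetic progressions shows that all but $o(x/\log x)$ of them up to $x$ avoid $\{a^t+r:t\in\mathbb{N}_0\}$; Lemma~\ref{Lem2.2} then yields $a^{\alpha}p\in G_r$ for all $\alpha\in\mathbb{N}_0$ for each such $p$.

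When $a$ is not a Mersenne prime this is immediate: one may take the classical covering system with residues $0\imod 2$, $0\imod 3$, $1\imod 4$, $5\imod 6$, $7\imod{12}$, realizing the modulus $2$ by any odd prime divisor of $a+1$ (which exists and has order $2$ modulo $a$ precisely because $a+1$ is not a power of $2$) and the moduli $3,4,6,12$ by Zsigmondy primitive prime divisors of $a^3-1,a^4-1,a^6-1,a^{12}-1$. In particular, this already removes the second hypothesis of Theorem~\ref{Thm2.1} for $r\geq 2$.

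The main obstacle is the case in which $a=r+1$ is a Mersenne prime. Then no prime has order $2$ modulo $a$---for $a^2-1=(a-1)(a+1)$ with $a+1$ a power of $2$, so every prime factor of $a^2-1$ already divides $a-1$---and a prime of order $1$ divides $r$ and so cannot be used, since the congruence $a^{a_i}p\equiv-r\imod{q_i}$ would then force $p=q_i$. One is thus compelled to run the argument above with a covering system of $\mathbb{Z}$ all of whose moduli are pairwise distinct and at least $3$. Such covering systems do exist---this is a classical but delicate fact, and indeed covering systems with minimum modulus far larger than $3$ have been constructed---and, since the remaining Zsigmondy input is unconditional for $a\geq 3$ and $m\geq 3$, any one of them completes the proof. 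Isolating a concrete such covering system with conveniently small moduli, and verifying the (routine) consistency and coprimality of the associated system of congruences, are the remaining details; the genuine content lies entirely in the Mersenne subcase and in invoking the existence of a covering system with minimum modulus exceeding $2$.
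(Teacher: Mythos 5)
First, a point of order: the paper does not prove this statement---it appears as Conjecture \ref{Conj4.1} precisely because the author could only establish special cases under the extra hypotheses of Theorems \ref{Thm2.1} and \ref{Thm3.3}. So there is no proof of record to compare yours against; what you have written is a proposed resolution of an open problem. Your central idea is the right generalization: the paper's Lemma \ref{Lem2.1} implicitly uses the covering system of $\mathbb{Z}$ consisting of $0\imod{2^N}$ together with $2^{n-1}\imod{2^n}$ for $1\leq n\leq N$, in which the modulus $2^N$ occurs twice---which is exactly why the paper must assume two distinct primes of order $2^N$. You replace this with an arbitrary covering system with pairwise distinct moduli, each modulus $m_i$ realized by a Zsigmondy prime $q_i$ with $\ord_{q_i}(r+1)=m_i$, and the verifications you supply (distinctness of the $q_i$, $q_i\nmid r$, solvability and coprimality of the CRT class, compositeness of $(r+1)^tp+r$ for primes $p>M$ in the class, and the density argument excluding $p=(r+1)^t+r$ before applying Lemma \ref{Lem2.2}) are all correct. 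In particular, for $r\geq 2$ with $r+1$ not a Mersenne prime, the explicit system $0\imod 2$, $0\imod 3$, $1\imod 4$, $5\imod 6$, $7\imod{12}$, with the modulus $2$ realized by an odd prime divisor of $r+2$, genuinely closes that case and already strengthens Theorem \ref{Thm2.1} by deleting its second hypothesis.

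The gap is exactly where you locate it, and it is a real one: when $r+1$ is a Mersenne prime no prime has order $2$ modulo $r+1$, so your argument needs a covering system of $\mathbb{Z}$ with pairwise distinct moduli all at least $3$, and your proof does not produce one---it asserts the existence as ``a classical but delicate fact'' and defers the construction to ``remaining details.'' Such systems do exist (explicit examples with least modulus $3$, and far larger, go back to Erd\H{o}s, Swift, Churchhouse, Krukenberg, Choi, and Nielsen), so the plan is salvageable by citing or exhibiting one and assembling the corresponding congruences; but as written, the Mersenne subcase---which includes $r=2$ and $r=6$ and which you yourself identify as carrying ``the genuine content''---is not proved. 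Until that covering system is written down or properly referenced, the conjecture remains open for Mersenne values of $r+1$, and your text should be presented as a proof of the non-Mersenne case together with a reduction of the remaining case to a known combinatorial fact.
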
 
 
\end{document}